\newtheorem{thm}{Theorem}[section]
\newtheorem{cor}[thm]{Corollary}
\newtheorem{prop}[thm]{Proposition}
\theoremstyle{definition}
\newtheorem{defn}[thm]{Definition}
\theoremstyle{remark}
\numberwithin{equation}{section}
\begin{document}
\title[On the Operator-valued $\mu$-cosine]{On the Operator-valued $\mu$-cosine functions }
\author[B. Bouikhalene and  E. Elqorachi]{ Bouikhalene Belaid and  Elqorachi Elhoucien}
\thanks{Keywords: cosine equation; locally compact group; unitary representation; character, multiplicative function.}
\thanks{ 2010 MSC: 47D09; 22D10; 39B42}

\begin{abstract} Let $(G,+)$ be a topological abelian group with a neutral element $e$ and
 let  $\mu : G\longrightarrow\mathbb{C}$ be a continuous character of $G$.
 Let $(\mathcal{H}, \langle \cdot,\cdot \rangle)$ be a complex Hilbert space and
 let $\mathbf{B}(\mathcal{H})$ be the algebra of all linear continuous operators
 of $\mathcal{H}$ into itself. A continuous mapping $ \Phi: G\longrightarrow \mathbf{B}(\mathcal{H})$
  will be called an operator-valued $\mu$-cosine function if it satisfies both the $\mu$-cosine equation $$\Phi(x+y)+\mu(y)\Phi(x-y)=2\Phi(x)\Phi(y),\; x,y\in G$$
and the condition $\Phi(e)=I,$ where $I$ is the identity of
$\mathbf{B}(\mathcal{H})$. We show that any hermitian
operator-valued $\mu$-cosine functions has the form
$$\Phi(x)=\frac{\Gamma(x)+\mu(x)\Gamma(-x)}{2}$$ where $ \Gamma: G\longrightarrow \mathbf{B}(\mathcal{H})$
is a continuous  multiplicative operator. As an application, positive definite kernel theory and
W. Chojnacki's results on the uniformly bounded normal cosine operator are used to give explicit formula of solution of the cosine equation.
\end{abstract}
\maketitle

\section{Introduction}
\subsection{}
The $\mu$-cosine equation, also called the pre-d'Alembert equation,
on abelian group $G$ is the equation
\begin{equation}f(x+y)+\mu(y)f(x-y)=2f(x)f(y),\; x,y\in G\end{equation}
where $f: G\longrightarrow \mathbb{C}$ is the unknoun. Davison [8]
gave solution of (1.1) in terms of traces of certain representations
of $G$ on $\mathbb{C}^{2}$. In [22] Stetk{\ae}r proves that a
non-zero solution of (1.1) has the form
\begin{equation}f(x)=\frac{\chi(x)+\mu(x)\chi(-x)}{2}, x\in
G,\end{equation} where $\chi$ is a character of $G$. In the case
where $\mu=1$, equation (1.1) becomes the classic cosine functional
equation (also called the d'Alembert functional equation)
\begin{equation} f(x+y)+f(x-y)=2f(x)f(y), \; x, y\in G.
\end{equation}
Several mathematicians studied the equation (1.3). The monographs by
Acz\'{e}l [2] and by Acz\'{e}l and Dhombres [3] have references and
detailed discussions. The main purpose of this work is to extend
equation (1.1) to functions taking values in the algebra
$\mathbf{B}(\mathcal{H})$ of bounded operators on a Hilbert space
$\mathcal{H}$.
\subsection{}
Throughout this paper, $G$ will be a topological abelian group with
the unit element $e$. The space of continuous complex-valued
functions is denoted by $\mathcal{C}(G)$ and the set of all
continuous homomorphisms $\gamma : G\longrightarrow
\mathbb{C}\setminus\{0\}$ by $\mathcal{M}(G)$. Let $\mu :
G\longrightarrow\mathbb{C}^{*}$ be a continuous character of the
group $G$ i.e. $\mu\in \mathcal{M}(G)$ such that $\mu(e)=1$. For all
$f\in \mathcal{C}(G)$ we define the function $\check{f}$ by
$\check{f}(x)=f(-x)$. Let $(\mathcal{H}, \langle, \rangle)$ be a
Hilbert space over $\mathbb{C}$ and let $\mathbf{B}(\mathcal{H})$ be
the algebra of all linear continuous operators of $\mathcal{H}$ into
itself with the usual operator norm denoted $\|.\|$. A mapping $
\Phi: G\longrightarrow \mathbf{B}(\mathcal{H})$ is said to be
hermitian if it satisfies $\Phi^{*}(x)=\Phi(-x)$ for all $x\in G$,
where $\Phi^{*}(x)$ is the adjoint operator of $\Phi(x)$. A
continuous mapping $ \Gamma: G\longrightarrow
\mathbf{B}(\mathcal{H})$ is said to be a multiplicative operator if
$\Gamma(x+y)=\Gamma(x)\Gamma(y)$ for all $x,y\in G$ and
$\Gamma(e)=I$. Also  we say that a continuous mapping $ \Phi :
G\longrightarrow \mathbf{B}(\mathcal{H})$ is an operator
 valued $\mu$-cosine function if  it satisfies both the $\mu$-cosine functional equation
\begin{equation} \Phi(x+y)+\mu(y)\Phi(x-y)=2\Phi(x)\Phi(y), \; x, y\in G
\end{equation}
and the conditions  $\Phi(e)=1$. The scalar case of (1.4) is given
by the equation (1.1).  For $\mu=1$ we obtain the cosine functional
equation
\begin{equation} \Phi(x+y)+\Phi(x-y)=2\Phi(x)\Phi(y), \; x, y\in G.
\end{equation}
Several variants of (1.5) has been studied by Kisy\'{n}ski [10] and
[11], Sz\'{e}kelyhidi [23], Chojnacki [6] and [7], Stetk{\ae}r [19],
[20] and [21].
\subsection{}
The main purpose of this work is to solve  the  equation (1.4),
where the unknown $\Phi$ is an hermitian continuous functions on $G$
taking its values in $\mathbf{B}(\mathcal{H})$ or in the algebra
$M_{n}(\mathbb{C})$ of complex $n\times n$ matrices. By using
positive definite kernels and linear algebra theory we find that any
hermitian continuous solution of (1.4) has the form
$\Phi(x)=\frac{\Gamma(x)+\mu(x)\Gamma(-x)}{2}$ where $ \Gamma:
G\longrightarrow \mathbf{B}(\mathcal{H})$ is a continuous
multiplicative operator.
\subsection{Notation and preliminary   }
\begin{defn}
A continuous function $K : G\times G\longrightarrow\mathbb{C}$ is
said to be a positive definite kernel on $G$ if for all $n\in
\mathbb{N}$, $x_{1},...,x_{n}\in G$ and arbitrary complex numbers
$c_{1},c_{2},...,c_{n}$ we have
\begin{equation}\sum_{i=1}^{n}\sum_{j=1}^{n}c_{i}\overline{c_{j}}K(x_{i},x_{j})\geq 0.
\end{equation}
\end{defn}
We provide some known results on positive definite kernel theory.
For more details we refer to [15].
\begin{prop} Let $K$ be a positive definite kernel on $G$ and let $$V_{K}:=span\{K(x,\cdot) : x\in G\}.$$ Then\\
i) $V_{K}\subset \mathcal{C}(G)$,\\
ii) $V_{K}$ is equipped with the inner product
$$\langle f,g\rangle_{K}=\sum_{i=1}^{n}\sum_{j=1}^{n}\alpha_{i}\overline{\beta_{j}}K(x_{i},x_{j})$$
where\\ $f=\sum_{i=1}^{n}\alpha_{i}K(x_{i},.)$,
$g=\sum_{i=1}^{m}\beta_{j}K(x_{j},.)$,\\ $x_{1},...,x_{sup(m,n)}\in
G$
and $\alpha_{1},...,\alpha_{n},\beta_{1},...,\beta_{m}\in \mathbb{C}$.\\
Let $\mathcal{H}_{K}$ be the completion of $V_{K}$. Then
($\mathcal{H}_{K}$,$\langle . ,.\rangle_{K}$) is a Hilbert space of
continuous functions on $G$. The function $K$ is the reproducing
kernel of the Hilbert space $\mathcal{H}_{K}$.
\end{prop}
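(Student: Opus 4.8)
The plan is to carry out the classical Moore--Aronszajn construction. First I would record the Hermitian symmetry $K(x,y)=\overline{K(y,x)}$, which is forced by the positivity hypothesis $(1.6)$: applied with $n=2$ it shows that each matrix $[K(x_i,x_j)]$ is positive semidefinite, hence Hermitian. Part (i) is then immediate, since $K$ is continuous on $G\times G$, so each section $K(x,\cdot)$ lies in $\mathcal{C}(G)$ and $V_K$, being their finite linear span, is a subspace of $\mathcal{C}(G)$.

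The central point for (ii) is that the formula for $\langle f,g\rangle_K$ does not depend on the chosen representations of $f$ and $g$. I would settle this by the standard regrouping: writing $f=\sum_i\alpha_i K(x_i,\cdot)$ and $g=\sum_j\beta_j K(x_j,\cdot)$ on a common finite set of points, summing first over $i$ yields $\langle f,g\rangle_K=\sum_j\overline{\beta_j}\,f(x_j)$, which depends only on the function $f$, whereas summing first over $j$ and invoking the Hermitian symmetry yields $\sum_i\alpha_i\,\overline{g(x_i)}$, which depends only on $g$; hence the value is unambiguous. Sesquilinearity and conjugate symmetry then follow directly from the formula, while positivity $\langle f,f\rangle_K\geq 0$ is precisely $(1.6)$. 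Taking $g=K(y,\cdot)$ in the first regrouping gives the \emph{reproducing identity} $\langle f,K(y,\cdot)\rangle_K=f(y)$ on $V_K$; combined with the Cauchy--Schwarz inequality for the (a priori only positive semidefinite) form, this produces $\abs{f(y)}^2\leq K(y,y)\,\langle f,f\rangle_K$, so $\langle f,f\rangle_K=0$ forces $f\equiv 0$ and $\langle\cdot,\cdot\rangle_K$ is a genuine inner product on $V_K$.

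Next I would pass to the completion $\mathcal{H}_K$ and, crucially, realize it as a space of continuous functions rather than abstract classes. The bound $\abs{f(y)}\leq\sqrt{K(y,y)}\,\norm{f}_K$ shows each evaluation functional $f\mapsto f(y)$ is continuous on $V_K$, so for a Cauchy sequence $(f_n)$ the pointwise limits $f(y):=\lim_n f_n(y)$ exist and define a function on $G$. Since $y\mapsto K(y,y)$ is continuous, hence locally bounded, this convergence is uniform on compact subsets of $G$, so the limit $f$ is continuous; this identifies $\mathcal{H}_K$ with a space of continuous functions, the identification being injective exactly by the definiteness established above. Finally, extending the reproducing identity from $V_K$ to $\mathcal{H}_K$ by continuity of both sides gives $\langle f,K(y,\cdot)\rangle_K=f(y)$ for every $f\in\mathcal{H}_K$, so $K$ is the reproducing kernel of $\mathcal{H}_K$.

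The main obstacle is this last step: the abstract completion produces equivalence classes of Cauchy sequences, and the real content is to show these classes are canonically continuous functions on $G$ and that the reproducing property survives the completion. The engine for both is the uniform (on compacta) control of point evaluations furnished by Cauchy--Schwarz together with the continuity of the diagonal $y\mapsto K(y,y)$; verifying the inner-product axioms and part (i) is routine once the well-definedness regrouping is in hand.
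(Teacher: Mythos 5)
The paper offers no proof of this proposition at all: it is stated as a known result, with the reader referred to the literature on positive definite kernels (reference [15]), so there is nothing in the source to compare your argument against step by step. Your proposal is the standard Moore--Aronszajn construction, and it is correct: Hermitian symmetry of $K$ extracted from positivity with $n=2$, well-definedness of $\langle\cdot,\cdot\rangle_K$ via the two regroupings $\sum_j\overline{\beta_j}f(x_j)=\sum_i\alpha_i\overline{g(x_i)}$, definiteness via the reproducing identity on $V_K$ together with Cauchy--Schwarz for the a priori semidefinite form, and the realization of the completion as a space of continuous functions through the locally uniform control $|f(y)|\leq\sqrt{K(y,y)}\,\|f\|_K$ of point evaluations. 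The one point to tighten is the injectivity of the map from the abstract completion into functions on $G$: this does not follow from the definiteness on $V_K$ alone, since one must rule out a Cauchy sequence in $V_K$ whose pointwise limits vanish identically while its norms do not tend to zero. The correct justification is the one you supply in your following sentence, namely the reproducing identity $\langle f,K(y,\cdot)\rangle_K=f(y)$ extended by continuity to all of $\mathcal{H}_K$, combined with the density of $\mathrm{span}\{K(y,\cdot):y\in G\}$: an element of the completion whose associated function vanishes is orthogonal to a dense subspace and hence is zero. With the order of those two observations swapped, your argument is complete and supplies exactly the proof the paper leaves to the literature.
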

\begin{thm}
Let $K$ be a positive definite kernel on $G$. Then there exists a
Hilbert space ($\mathcal{H}_{K}$,$\langle . ,.\rangle_{K}$) and a
continuous mapping $$T: G\longrightarrow \mathcal{H}_{K},
x\longmapsto K(x,\cdot),$$
such that \\
1) $K(x,y)=\langle T(x),T(y)\rangle_{K}$ for all $x,y\in G$.\\
2) $span\{T(x) : x\in G\}$ is dense in $\mathcal{H}_{K}$.\\
Moreover, the pair $(\mathcal{H}_{K},T)$ is unique in the following
way : if another pair ($\mathcal{L},U$) satisfies (1) and (2), there
exists a unique unitary isomorphism $\Psi:
\mathcal{H}_{K}\longrightarrow \mathcal{L}$ such that $U=\Psi\circ
T$.\end{thm}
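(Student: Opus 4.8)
The plan is to take $(\mathcal{H}_K,\langle\cdot,\cdot\rangle_K)$ to be exactly the Hilbert space furnished by the preceding Proposition, namely the completion of $V_K=span\{K(x,\cdot):x\in G\}$ equipped with the stated inner product, and to set $T(x)=K(x,\cdot)$; this is the classical Aronszajn--Kolmogorov factorization of a positive definite kernel. With this choice the two displayed properties are almost immediate. For property (1) I would apply the inner-product formula of the Proposition to the one-term expansions $f=K(x,\cdot)$ and $g=K(y,\cdot)$ (so $n=m=1$, $\alpha_1=\beta_1=1$), which gives directly $\langle T(x),T(y)\rangle_K=K(x,y)$. For property (2), the span of the $T(x)$ is by definition $V_K$, which is dense in its completion $\mathcal{H}_K$. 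The only genuine verification at this stage is the continuity of $T$: I would expand
$$\|T(x)-T(x')\|_K^2=K(x,x)-K(x,x')-K(x',x)+K(x',x')$$
and invoke the joint continuity of $K$ to conclude that $x\mapsto T(x)$ is continuous from $G$ into $\mathcal{H}_K$.

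For the uniqueness assertion, suppose $(\mathcal{L},U)$ also satisfies (1) and (2). The natural candidate $\Psi$ is the map sending $T(x)$ to $U(x)$, extended linearly over $V_K$ and then by continuity to all of $\mathcal{H}_K$. The heart of the matter is to show this prescription is well defined and isometric, and here I would exploit the following key computation: for any finite family $x_1,\dots,x_n\in G$ and scalars $\alpha_1,\dots,\alpha_n$,
$$\Big\|\sum_{i=1}^n\alpha_i U(x_i)\Big\|_{\mathcal{L}}^2=\sum_{i,j}\alpha_i\overline{\alpha_j}\langle U(x_i),U(x_j)\rangle_{\mathcal{L}}=\sum_{i,j}\alpha_i\overline{\alpha_j}K(x_i,x_j)=\Big\|\sum_{i=1}^n\alpha_i T(x_i)\Big\|_K^2,$$
where the middle equality uses property (1) for $U$ and the outer ones the corresponding identity for $T$. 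Reading this identity with $\sum_i\alpha_i T(x_i)=0$ forces $\sum_i\alpha_i U(x_i)=0$, so the assignment $\Psi_0\colon\sum_i\alpha_i T(x_i)\mapsto\sum_i\alpha_i U(x_i)$ is well defined on $V_K$; the same identity says $\Psi_0$ is norm-preserving, hence an isometry on a dense subspace.

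Being a densely defined isometry, $\Psi_0$ extends uniquely to an isometry $\Psi\colon\mathcal{H}_K\to\mathcal{L}$. Its range is closed, since isometries between Hilbert spaces have closed range, and it contains $span\{U(x):x\in G\}$, which is dense in $\mathcal{L}$ by property (2) for $U$; therefore $\Psi$ is surjective, i.e.\ a unitary isomorphism, and $U=\Psi\circ T$ holds by construction. Uniqueness of $\Psi$ is then forced: any two such maps agree on the dense subspace $V_K$ and are continuous, hence coincide everywhere.

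I expect the main obstacle to be purely book-keeping rather than conceptual, namely making the well-definedness step fully rigorous: one must justify that the prospective value $\sum_i\alpha_i U(x_i)$ is independent of the chosen representation of an element of $V_K$ as a finite linear combination of the $T(x_i)$. This is precisely what the isometry identity delivers, so once that identity is established the remaining steps (continuous extension, surjectivity via closed range, and uniqueness by density) are standard Hilbert-space arguments.
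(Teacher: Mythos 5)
Your proposal is correct: it is the standard Aronszajn--Kolmogorov construction, taking $\mathcal{H}_K$ to be the completion of $V_K$ from Proposition 1.2, verifying (1) from the one-term case of the inner-product formula, (2) from density of $V_K$ in its completion, continuity of $T$ from the expansion of $\|T(x)-T(x')\|_K^2$, and uniqueness via the well-defined densely defined isometry $\sum_i\alpha_i T(x_i)\mapsto\sum_i\alpha_i U(x_i)$. The paper itself offers no proof of this theorem --- it is quoted as a known result with a reference to the literature --- so there is nothing to compare against; your argument correctly supplies the omitted proof, the only implicit point being that the norm expansion uses the Hermitian symmetry $K(x',x)=\overline{K(x,x')}$, which follows from the positive-definiteness condition (1.6).
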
 For all $f\in \mathcal{C}(G)$ and for all $x,y\in G$ we
define
\begin{equation}K_{f}(x,y):=\frac{1}{2}\{f(-y+x)+\mu(x)f(-y-x)\}.\end{equation}
and
\begin{equation}
f(x)=f^{+}_{\mu}(x)+f^{-}_{\mu}(x), \; x\in G,
\end{equation}
where $f_{\mu}^{+}(x)=\frac{f(x)+\mu(x)f(-x)}{2}$ and
$f_{\mu}^{-}(x)=\frac{f(x)-\mu(x)f(-x)}{2}$
\section{ General properties }
\begin{prop}
Let $ \Phi: G\longrightarrow \mathbf{B}(\mathcal{H})$  be a solution of (1.4). Then\\
i) $\Phi(-x)=\mu(-x)\Phi(x)$ for all $x\in G$.\\
ii) $\Phi(x)\Phi(y)=\Phi(y)\Phi(x)$ for all $x, y\in G$.\\
iii) For all invertible operator $S\in \mathbf{B}(\mathcal{H})$ we
have $S\Phi(x)S^{-1}$ for all $x\in G$ is a solution of (1.4).
\end{prop}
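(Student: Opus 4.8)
The plan is to treat the three parts in the order (i), (ii), (iii), since (i) feeds directly into (ii) and the only genuine content lies in establishing commutativity in (ii); parts (i) and (iii) are immediate consequences of the functional equation together with the normalization $\Phi(e)=I$.

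For (i) I would simply set $x=e$ in (1.4). Using $\Phi(e)=I$ this gives $\Phi(y)+\mu(y)\Phi(-y)=2\Phi(y)$, whence $\mu(y)\Phi(-y)=\Phi(y)$. Since $\mu$ is a character, $\mu(-y)=\mu(y)^{-1}$, and multiplying through by $\mu(y)^{-1}=\mu(-y)$ yields $\Phi(-y)=\mu(-y)\Phi(y)$, which is the claim after renaming $y$ as $x$.

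The crux is (ii). Here I would write (1.4) twice, once as it stands and once with the roles of $x$ and $y$ interchanged; the abelian structure gives $\Phi(x+y)=\Phi(y+x)$, so the two equations share the same first term on the left. In the swapped equation the term $\mu(x)\Phi(y-x)$ appears, and using (i) to rewrite $\Phi(y-x)=\Phi(-(x-y))=\mu(x-y)^{-1}\Phi(x-y)$ together with the multiplicativity identity $\mu(x)\mu(x-y)^{-1}=\mu(y)$ turns the swapped equation into $\Phi(x+y)+\mu(y)\Phi(x-y)=2\Phi(y)\Phi(x)$. Since its left-hand side is now identical to that of the original equation, comparing the right-hand sides gives $2\Phi(x)\Phi(y)=2\Phi(y)\Phi(x)$, i.e.\ commutativity. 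The one point to verify carefully is the character computation $\mu(x)\mu(x-y)^{-1}=\mu(x)\cdot\mu(x)^{-1}\mu(y)=\mu(y)$, which is exactly where the hypothesis that $\mu$ is multiplicative enters.

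Finally, for (iii) I would conjugate the equation. Setting $\Psi(x):=S\Phi(x)S^{-1}$ and inserting $S^{-1}S=I$ between the two factors on the right, one checks directly that $\Psi(x+y)+\mu(y)\Psi(x-y)=S[\Phi(x+y)+\mu(y)\Phi(x-y)]S^{-1}=2S\Phi(x)\Phi(y)S^{-1}=2\Psi(x)\Psi(y)$, while $\Psi(e)=SIS^{-1}=I$; hence $\Psi$ again solves (1.4). I expect (ii) to be the only step requiring a genuine idea, namely the symmetrization trick exploiting that $G$ is abelian and $\mu$ a character, whereas (i) and (iii) are routine substitution and conjugation.
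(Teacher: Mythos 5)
Your proof is correct and follows essentially the same route as the paper: part (ii) by swapping $x$ and $y$ in (1.4) and using (i) together with the character identity $\mu(x)\mu(-(x-y))=\mu(y)$, and part (iii) by direct conjugation. The only (harmless) difference is in (i), where you substitute $x=e$ directly into (1.4), whereas the paper first derives the intermediate identity $\Phi(x)\Phi(y)=\mu(y)\Phi(x)\Phi(-y)$ and only then sets $x=e$; your shortcut is simpler and equally valid.
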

\begin{proof}
i) For all $x,y \in G$ we have
\begin{eqnarray*}2\Phi(x)\Phi(y)&=&\Phi(x+y)+\mu(y)\Phi(x-y)\\
&=&\mu(y)(\Phi(x-y)+\mu(-y)\Phi(x+y))\\&=&
2\mu(y)\Phi(x)\Phi(-y).\end{eqnarray*} From which we get that
\begin{equation}\Phi(x)\Phi(y)=\mu(y)\Phi(x)\Phi(-y).\end{equation}
Setting $x=e$ in (2.1) and $\Phi(e)=I$ we get that $\Phi(y)=\mu(y)\Phi(-y)$ for all $y\in G$.\\
ii) For all $x,y\in G$ we have
\begin{eqnarray*}2\Phi(y)\Phi(x)&=&\Phi(y+x)+\mu(x)\Phi(y-x)\\&=&\Phi(x+y)+\mu(x)\mu(y-x)\Phi(x-y)\\&=&
\Phi(x+y)+\mu(y)\Phi(x-y)\\&=&
2\Phi(x)\Phi(y).\end{eqnarray*} From which we get that $\Phi(x)\Phi(y)=\Phi(y)\Phi(x)$ for all $x, y\in G$.\\
iii) For all $x,y \in G$ we have
\begin{eqnarray*}S\Phi(x+y)S^{-1}+\mu(y)S\Phi(x-y)S^{-1}&=&S(\Phi(x+y)+\mu(y)\Phi(x-y))S^{-1}\\
&=&S2\Phi(x)\Phi(y)S^{-1}\\
&=&2S\Phi(x)S^{-1}S\Phi(y)S^{-1}.\end{eqnarray*} From which we get
that
$$S\Phi(x+y)S^{-1}+\mu(y)S\Phi(x-y)S^{-1}=2S\Phi(x)S^{-1}S\Phi(y)S^{-1}$$ for all $x, y\in G$. Furthermore we have
$$S\Phi(e)S^{-1}=I.$$
\end{proof}
\begin{prop}
Let $M: G\longrightarrow \mathbf{B}(\mathcal{H})$ be a multiplicative operator. Then \\
$$\Phi(x)=\frac{M(x)+\mu(x)M(-x)}{2}, \;x\in G$$
is an operator-valued $\mu$-cosine functions.
\end{prop}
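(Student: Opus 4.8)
The plan is to verify directly the two defining requirements of an operator-valued $\mu$-cosine function, namely $\Phi(e)=I$ and the functional equation (1.4), using only that $M$ is multiplicative and that $\mu$ is a continuous character. Since everything reduces to an algebraic identity, the proof is a computation rather than an argument, and the work lies in organizing it so the cancellations are transparent.

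First I would check the normalization. Because $G$ is abelian with neutral element $e$, we have $-e=e$, so $M(-e)=M(e)=I$, while $\mu(e)=1$. Hence $\Phi(e)=\tfrac12\bigl(M(e)+\mu(e)M(-e)\bigr)=\tfrac12(I+I)=I$.

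Next comes the main computation. The idea is to expand $\Phi(x+y)$ and $\mu(y)\Phi(x-y)$ separately, using $M(x+y)=M(x)M(y)$, $M(x-y)=M(x)M(-y)$, $M(-x-y)=M(-x)M(-y)$, $\mu(x+y)=\mu(x)\mu(y)$, and the crucial identity $\mu(y)\mu(-y)=\mu(e)=1$. After these substitutions one finds that $\Phi(x+y)$ contributes $\tfrac12\bigl(M(x)M(y)+\mu(x)\mu(y)M(-x)M(-y)\bigr)$ and $\mu(y)\Phi(x-y)$ contributes $\tfrac12\bigl(\mu(y)M(x)M(-y)+\mu(x)M(-x)M(y)\bigr)$, so that both $\Phi(x+y)+\mu(y)\Phi(x-y)$ and $2\Phi(x)\Phi(y)$ reduce to one half of the same four-term sum
\begin{equation*}
M(x)M(y)+\mu(y)M(x)M(-y)+\mu(x)M(-x)M(y)+\mu(x)\mu(y)M(-x)M(-y),
\end{equation*}
whence the functional equation holds for all $x,y\in G$.

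The computation is entirely routine, so there is no genuine obstacle; the only point requiring a little care is that the operators $M(\cdot)$ are not assumed to commute, so one must keep the left–right order of each product intact throughout. It is precisely because the four terms pair off respecting this order that commutativity of the values of $M$ is never needed: the character property of $\mu$ alone supplies the cancellation $\mu(y)\mu(-y)=1$ that makes the two cross terms agree.
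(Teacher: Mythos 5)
Your proof is correct and follows the same route as the paper, which simply asserts the result ``by easy computations'' from $M(x+y)=M(x)M(y)$ and $M(e)=I$; you have supplied exactly those computations, with the key cancellation $\mu(y)\mu(-y)=1$ correctly identified. (One minor aside: since $G$ is abelian the operators $M(x)$ do in fact commute with one another, but as you note the verification never needs this.)
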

\begin{proof}
Since $M(x+y)=M(x)M(y)$ for all $x,y \in G$ and $M(e)=I$, we get by
easy computations that $\Phi$  is an operator-valued $\mu$-cosine
functions.
\end{proof}
By easy computations we get the following proposition
\begin{prop} For all $f\in \mathcal{C}(G)$ we have the following statements\\
i) the mapping $(x,y)\longmapsto K_{f}(x,y)$ is continuous.\\
ii) $K_{f}(-x,y)=\mu(-x)K_{f}(x,y)$ for all $x,y\in G$.\\
iii) $K_{f}(0,-y)=f(y)$ for all $y\in G$.\\
4i) $K_{f}(x,0)=f^{+}_{\mu}(x)$ for all $x\in G$.
\end{prop}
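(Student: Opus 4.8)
The plan is to verify each of the four assertions by direct substitution into the defining formula
$$K_{f}(x,y)=\frac{1}{2}\{f(-y+x)+\mu(x)f(-y-x)\},$$
relying only on the continuity of $f$ and of the character $\mu$, on the continuity of the group operations, and on the character identity $\mu(x)\mu(-x)=\mu(e)=1$. No auxiliary construction is needed; everything reduces to bookkeeping, so I would simply carry out the four computations in turn.

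For (i), I would observe that in a topological abelian group the maps $(x,y)\mapsto -y+x$ and $(x,y)\mapsto -y-x$ are continuous, being composed from the continuous addition and inversion. Precomposing $f$ with these two maps gives continuous functions of $(x,y)$; multiplying the second by the continuous factor $\mu(x)$, adding, and scaling by $\tfrac12$ preserves continuity. Hence $(x,y)\mapsto K_{f}(x,y)$ is continuous.

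For (ii), I would substitute $-x$ into the first slot to get $K_{f}(-x,y)=\frac{1}{2}\{f(-x-y)+\mu(-x)f(x-y)\}$, and separately expand $\mu(-x)K_{f}(x,y)=\frac{1}{2}\{\mu(-x)f(x-y)+\mu(-x)\mu(x)f(-x-y)\}$. The decisive step is the character relation $\mu(-x)\mu(x)=1$, which turns the coefficient of $f(-x-y)$ into $1$; the two expressions then match term by term, yielding $K_{f}(-x,y)=\mu(-x)K_{f}(x,y)$. This is the one place where a little care is required, since one must track the sign change in the first argument and the factor $\mu(-x)$ at the same time, so I would write it out explicitly rather than assert it.

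For (iii) and (iv) the computations are shorter. Setting the first argument to the neutral element $e$ (denoted $0$ in the statement), using $\mu(e)=1$, and feeding $-y$ into the second slot makes both summands equal to $f(y)$, which gives $K_{f}(0,-y)=f(y)$. Setting the second argument to $e$ collapses $K_{f}(x,e)$ to $\frac{1}{2}\{f(x)+\mu(x)f(-x)\}$, which is precisely $f^{+}_{\mu}(x)$ by definition, giving (iv). I expect no genuine obstacle anywhere in the proof; the substitution underlying (ii) is merely the spot most prone to a sign or slot error.
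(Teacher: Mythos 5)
Your verification of all four parts by direct substitution into the definition $K_{f}(x,y)=\frac{1}{2}\{f(-y+x)+\mu(x)f(-y-x)\}$ is correct, including the one delicate point in (ii) where the character identity $\mu(x)\mu(-x)=1$ is needed. The paper gives no written proof at all, stating only that the proposition follows ``by easy computations,'' and your computations are exactly the intended ones.
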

We need the following proposition in the main result
\begin{prop} Let $ \Phi: G\longrightarrow \mathbf{B}(\mathcal{H})$ be be a solution of (1.4) such that $\Phi(x)^{*}=\Phi(-x)$ for all $x\in G$ and let
$f: G \longrightarrow \mathbb{C}$, $x\longmapsto \langle \Phi(x)\xi,\xi\rangle $ for  $\xi\in \mathcal{H}$. Then\\
i) $f(-x)=\mu(-x)f(x)$ for all $x\in G$.\\
ii) $\overline{f(x)}=f(-x)$ for all $x\in G$.\\
iii) $K_{f}(x,y)=\langle \Phi(x)\xi,\Phi(y)\xi\rangle$ for all $x,y\in G$.\\
4i) $K_{f}$ is a positive definite kernel.\\
 5i) $K_{f}(x,.)=\frac{1}{2}\{(R_{-x}\check{f})+\mu(x)(R_{x}\check{f})\}$ for all $x\in G$ where $R$ is the right regular representation of $G$.
\end{prop}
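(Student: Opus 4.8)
The plan is to verify the five assertions in turn, each reducing to a short computation from the defining equation (1.4), Proposition 2.1, and the hermitian hypothesis $\Phi(x)^{*}=\Phi(-x)$. For (i) I would insert Proposition 2.1(i) directly into the definition of $f$: since $\Phi(-x)=\mu(-x)\Phi(x)$ and $\mu(-x)$ is a scalar, pulling it out of the first slot of the inner product gives $f(-x)=\langle \Phi(-x)\xi,\xi\rangle=\mu(-x)\langle \Phi(x)\xi,\xi\rangle=\mu(-x)f(x)$. For (ii) I would use conjugate symmetry of the inner product together with the hermitian hypothesis and the definition of the adjoint:
$$\overline{f(x)}=\overline{\langle \Phi(x)\xi,\xi\rangle}=\langle \xi,\Phi(x)\xi\rangle=\langle \Phi(x)^{*}\xi,\xi\rangle=\langle \Phi(-x)\xi,\xi\rangle=f(-x).$$

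The crux is (iii), on which the positive definiteness in (4i) rests. Starting from the right-hand side and moving $\Phi(y)$ across as an adjoint, I would write $\langle \Phi(x)\xi,\Phi(y)\xi\rangle=\langle \Phi(y)^{*}\Phi(x)\xi,\xi\rangle=\langle \Phi(-y)\Phi(x)\xi,\xi\rangle$ using the hermitian hypothesis. By Proposition 2.1(i) we have $\Phi(-y)=\mu(-y)\Phi(y)$, and by the commutativity of Proposition 2.1(ii) combined with (1.4), $2\Phi(y)\Phi(x)=2\Phi(x)\Phi(y)=\Phi(x+y)+\mu(y)\Phi(x-y)$. Multiplying by $\mu(-y)$ and using $\mu(-y)\mu(y)=\mu(e)=1$ yields $2\Phi(-y)\Phi(x)=\mu(-y)\Phi(x+y)+\Phi(x-y)$, so pairing with $\xi$ gives $\langle \Phi(x)\xi,\Phi(y)\xi\rangle=\tfrac12\{f(x-y)+\mu(-y)f(x+y)\}$. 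It then remains to reconcile this with $K_{f}(x,y)=\tfrac12\{f(-y+x)+\mu(x)f(-y-x)\}$: invoking part (i), $\mu(x)f(-y-x)=\mu(x)\mu(-x-y)f(x+y)=\mu(-y)f(x+y)$ since $\mu$ is multiplicative and $\mu(x)\mu(-x)=1$, so the two expressions agree. The only care needed here is the bookkeeping of the character identities.

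With (iii) established, (4i) is immediate from its Gram-matrix form: for any $x_{1},\dots,x_{n}\in G$ and $c_{1},\dots,c_{n}\in\mathbb{C}$,
$$\sum_{i=1}^{n}\sum_{j=1}^{n}c_{i}\overline{c_{j}}K_{f}(x_{i},x_{j})=\Big\langle \sum_{i=1}^{n}c_{i}\Phi(x_{i})\xi,\sum_{j=1}^{n}c_{j}\Phi(x_{j})\xi\Big\rangle=\Big\|\sum_{i=1}^{n}c_{i}\Phi(x_{i})\xi\Big\|^{2}\geq 0,$$
while continuity of $K_{f}$ was already recorded in Proposition 2.3(i). Finally, (5i) is a matter of unwinding definitions against the convention $(R_{t}g)(s)=g(s+t)$ for the right regular representation: then $(R_{-x}\check{f})(y)=\check{f}(y-x)=f(x-y)=f(-y+x)$ and $(R_{x}\check{f})(y)=\check{f}(y+x)=f(-y-x)$, so the claimed identity of functions of $y$ reduces termwise to the definition (1.9) of $K_{f}(x,y)$. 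I expect (iii) to be the only step requiring a genuine combination of the earlier results, the remaining parts being formal consequences.
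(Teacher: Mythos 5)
Your proposal is correct and follows essentially the same route as the paper: (i) and (ii) via Proposition 2.1(i) and the adjoint identity, (iii) via the functional equation (1.4) combined with $\Phi(y)^{*}=\Phi(-y)$, (4i) as a Gram-matrix norm, and (5i) by unwinding the right regular representation. The only cosmetic difference is in (iii), where the paper applies (1.4) directly at the arguments $(-y,x)$ to get $\Phi(-y+x)+\mu(x)\Phi(-y-x)=2\Phi(-y)\Phi(x)$ in one step, whereas you reach the same identity by starting from $2\Phi(x)\Phi(y)$ and doing the extra character bookkeeping with $\mu(-y)$ and part (i) — both are valid and equivalent.
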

\begin{proof}
i) Since $\Phi(-x)=\mu(-x)\Phi(x)$ for all $x\in G$ we get that
$$f(-x)=\langle\Phi(-x)\xi,\xi\rangle=\langle\mu(-x)\Phi(x)\xi,\xi\rangle=\mu(-x)\langle\Phi(x)\xi,\xi\rangle=\mu(-x)f(x).$$
ii) for all $x\in G$ we have
$$\overline{f(x)}=\overline{\langle\Phi(-x)\xi,\xi\rangle}=\langle\xi,\Phi(x)\xi\rangle
=\langle\Phi(x)^{*}\xi,\xi\rangle=\langle\Phi(-x)\xi,\xi\rangle=f(-x).$$
iii) For all $x,y\in G$ we have
\begin{eqnarray*}K_{f}(x,y)&=&\frac{1}{2}\{f(-y+x)+\mu(x)f(-y-x)\}\\
&=&\frac{1}{2}\{(\langle\Phi(-y+x)+\mu(x)\Phi(-y+x)\xi,\xi\rangle)\}\\&=&
\langle\Phi(-y)\Phi(x)\xi,\xi\rangle\\&=&
\langle\Phi(y)^{*}\Phi(x)\xi,\xi\rangle\\&=&
\langle\Phi(x)\xi,\Phi(y)\xi\rangle\end{eqnarray*} 4i) For all $n\in
\mathbb{N}$, $x_{1},...,x_{n}\in G$ and arbitrary complex numbers
$c_{1},c_{2},...,c_{n}$ we have
\begin{eqnarray*}\sum_{i=1}^{n}\sum_{j=1}^{n}c_{i}\overline{c_{j}}K_{f}(x_{i},x_{j})&=&
\sum_{i=1}^{n}\sum_{j=1}^{n}c_{i}\overline{c_{j}}\langle\Phi(x_{i})\xi,\Phi(x_{j})\xi\rangle\\
&=&\langle\sum_{i=1}^{n}c_{i}\Phi(x_{i})\xi,\sum_{j=1}^{n}c_{j}\Phi(x_{j})\xi\rangle\\
&=&\|\sum_{i=1}^{n}c_{i}\Phi(x_{i})\|^{2}\geq 0.\end{eqnarray*} 5i)
For all $x,y\in G$ we have
\begin{eqnarray*}K_{f}(x,y)&=&\frac{1}{2}\{f(-y+x)+\mu(x)f(-y-x)\}\\
&=& \frac{1}{2}\{(R_{x}f)(-y)+\mu(x)(R_{-x}f)(-y)\}\\
&=&\frac{1}{2}\{\check{(R_{x}f)}(y)+\mu(x)\check{(R_{-x}f)}(y).\}
\end{eqnarray*} Since $\check{(R_{x}f)}=R_{-x}\check{f}$ for all $x\in G$ it follows that $K_{f}(x,y)=\frac{1}{2}\{R_{-x}\check{f}(y)+\mu(x)(R_{-}\check{f})(y)\}$ for all $x,y\in G$. So that we have
$K_{f}(x,.)=\frac{1}{2}\{(R_{-x}\check{f})+\mu(x)(R_{x}\check{f})\}$
for all $x\in G$.
\end{proof}
\section{Main Result}
In the next theorem we solve the equation (1.4).
\begin{thm}
Let $ \Phi: G\longrightarrow \mathbf{B}(\mathcal{H})$ be an
hermitian operator-valued $\mu$-cosine functions. Then there exists
a multiplicative operator $M : G\longrightarrow
\mathbf{B}(\mathcal{H})$ such that
$$\Phi(x)=\frac{M(x)+\mu(x)M(-x)}{2}, \; x\in G.$$
\end{thm}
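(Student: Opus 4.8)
The plan is to exploit the algebraic rigidity of the family $\{\Phi(x)\}_{x\in G}$ in order to reduce the operator equation to its scalar counterpart (1.1), and then to reassemble the scalar solutions given by Stetk{\ae}r's formula (1.2) into the desired multiplicative operator. The first observation is that, under the hermitian hypothesis, each $\Phi(x)$ is \emph{normal}: indeed Proposition 2.1(i) gives $\Phi(x)^{*}=\Phi(-x)=\mu(-x)\Phi(x)$, so $\Phi(x)^{*}$ is a scalar multiple of $\Phi(x)$ and hence $\Phi(x)^{*}\Phi(x)=\mu(-x)\Phi(x)^{2}=\Phi(x)\Phi(x)^{*}$. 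Combining this with the commutativity $\Phi(x)\Phi(y)=\Phi(y)\Phi(x)$ of Proposition 2.1(ii), one sees that $\{\Phi(x),\Phi(x)^{*}:x\in G\}$ is a commuting family of normal operators, so the unital $C^{*}$-algebra $\mathcal{A}\subset\mathbf{B}(\mathcal{H})$ it generates is commutative.

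Next I would pass to the Gelfand representation $\Gamma:\mathcal{A}\longrightarrow C(\Omega)$, where $\Omega$ is the compact spectrum of $\mathcal{A}$, writing $f_{\omega}(x):=\omega(\Phi(x))=\Gamma(\Phi(x))(\omega)$ for $\omega\in\Omega$. Since each $\omega$ is an algebra homomorphism, applying $\omega$ to (1.4) yields $f_{\omega}(x+y)+\mu(y)f_{\omega}(x-y)=2f_{\omega}(x)f_{\omega}(y)$ together with $f_{\omega}(e)=\omega(I)=1$; norm continuity of $x\mapsto\Phi(x)$ and contractivity of $\omega$ make each $f_{\omega}$ a continuous scalar $\mu$-cosine function. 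As $f_{\omega}(e)=1\neq 0$, Stetk{\ae}r's theorem [22] applies and produces a character $\chi_{\omega}\in\mathcal{M}(G)$ with $f_{\omega}(x)=\tfrac12\bigl(\chi_{\omega}(x)+\mu(x)\chi_{\omega}(-x)\bigr)$; since $\chi_{\omega}(-x)=\chi_{\omega}(x)^{-1}$, the value $\chi_{\omega}(x)$ is one of the two roots of $z^{2}-2f_{\omega}(x)z+\mu(x)=0$.

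The final step is to reassemble these scalar characters. The natural candidate is the operator $M(x)$ corresponding under $\Gamma^{-1}$ to the function $\omega\mapsto\chi_{\omega}(x)$: if this function lies in $C(\Omega)$, then the identities $\chi_{\omega}(x)\chi_{\omega}(y)=\chi_{\omega}(x+y)$ and $\chi_{\omega}(e)=1$ transfer through the $*$-isomorphism to give $M(x)M(y)=M(x+y)$ and $M(e)=I$, while $f_{\omega}(x)=\tfrac12\bigl(\chi_{\omega}(x)+\mu(x)\chi_{\omega}(-x)\bigr)$ transfers to $\Phi(x)=\tfrac12\bigl(M(x)+\mu(x)M(-x)\bigr)$, which is exactly the claim.

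I expect the main obstacle to be precisely the well-definedness and continuity of the assignment $\omega\mapsto\chi_{\omega}(x)$. Stetk{\ae}r's formula determines $\chi_{\omega}$ only up to interchanging $\chi_{\omega}$ with $\mu\check{\chi}_{\omega}$, i.e. up to the choice between the two roots of the quadratic above, and these branches collide on the locus where $f_{\omega}(x)^{2}-\mu(x)$ vanishes; moreover the branch must be selected coherently across all $x$ at once so as not to destroy multiplicativity and continuity of $x\mapsto M(x)$. Boundedness of each $M(x)$ is not the difficulty, since both roots of $z^{2}-2f_{\omega}(x)z+\mu(x)=0$ are controlled by $\|\Phi(x)\|$ uniformly in $\omega$; the genuine issue is the continuous global selection. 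This is where I would bring in positive-definite kernel theory, to realize the relevant family concretely on a reproducing-kernel Hilbert space as prepared in Theorem 1.2 and Proposition 2.4, together with W.~Chojnacki's structure theorems [6], [7] for uniformly bounded normal cosine operators, which supply a globally consistent multiplicative (representation) model resolving the branch ambiguity.
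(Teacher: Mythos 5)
Your reduction to the scalar case via the Gelfand transform of the commutative $C^{*}$-algebra generated by the commuting normal family $\{\Phi(x)\}$ is sound as far as it goes: the identity $\Phi(x)^{*}=\mu(-x)\Phi(x)$ does give normality, each character $\omega$ does produce a continuous scalar solution $f_{\omega}$ of (1.1) with $f_{\omega}(e)=1$, and Stetk{\ae}r's formula applies pointwise in $\omega$. But the proof stops exactly at the step that carries all the difficulty. You correctly diagnose that the theorem reduces to making a selection $\omega\mapsto\chi_{\omega}(x)$ that is simultaneously (a) continuous on $\Omega$ for each $x$ (so that it is the Gelfand transform of an operator $M(x)$), (b) multiplicative in $x$, and (c) continuous in $x$; and you correctly note that the two branches of $z^{2}-2f_{\omega}(x)z+\mu(x)=0$ collide where $f_{\omega}(x)^{2}=\mu(x)$, so there can be genuine topological obstructions to a global continuous choice. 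Having identified this as the crux, however, you resolve it only by announcing that you ``would bring in'' reproducing-kernel theory and Chojnacki's structure theorems. That is a statement of intent, not an argument: nothing in the proposal actually produces the coherent branch, and without it the theorem is not proved. (Uniform boundedness of both roots, which you do verify, gives boundedness of a candidate $M(x)$ but says nothing about continuity or multiplicativity of the selection.)

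The paper's own proof takes a route that never encounters the branch problem. Fixing a cyclic vector $\xi$ and setting $\varphi(x)=\langle\Phi(x)\xi,\xi\rangle$, it shows (Proposition 2.4) that $K_{\varphi}(x,y)=\langle\Phi(x)\xi,\Phi(y)\xi\rangle$ is a positive definite kernel and that $K_{\varphi}(x,\cdot)=\tfrac12\bigl(R_{-x}\check{\varphi}+\mu(x)R_{x}\check{\varphi}\bigr)$, where $R$ is the right regular representation. The uniqueness part of Theorem 1.3 then yields a unitary $\psi:\mathcal{H}_{\varphi}\to\mathcal{H}$ intertwining $\Phi(x)\xi$ with $K_{\varphi}(x,\cdot)$, whence $\Phi(x)=\psi\circ\tfrac12(R_{-x}+\mu(x)R_{x})\circ\psi^{-1}$, and $M(x):=\psi\circ R_{-x}\circ\psi^{-1}$ is automatically multiplicative and continuous because translation already is. In other words, the multiplicative operator is constructed globally in one stroke from the regular representation, rather than being glued together from spectral fibers. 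If you want to salvage your approach, the missing content is precisely a proof of a continuous multiplicative selection theorem over $\Omega$, which is essentially equivalent in difficulty to the theorem itself; as written, the proposal has a genuine gap.
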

\begin{proof}
Let $\xi\in \mathcal{H}$. By the same way as in the proof of Theorem 2.2 in [1], we can suppose that the vector $\xi$ is cyclic\\
Let now  $\varphi(x)=\langle \Phi(x)\xi,\xi\rangle$ for all $x\in
G$. For all $x,y\in G$ we have
$K_{\varphi}(x,y)=\langle\Phi(x)\xi,\Phi(y)\xi\rangle$. So that
$K_{\varphi}$ is a positive definite kernel. According to Theorem
1.3 there exists a Hilbert space
$(\mathcal{H}_{\varphi},\langle\cdot,\cdot \rangle_{\varphi})$ and a
mapping $T: G\longrightarrow\mathcal{H}_{\varphi}$, $x\longmapsto
K_{\varphi}(\cdot,x)$ such that
\begin{eqnarray*}K_{\varphi}(x,y)&=&\langle K_{\varphi}(x,\cdot),K_{\varphi}(y,\cdot)\rangle\\&=&
\langle\Phi(x)\xi,\Phi(y)\xi\rangle\end{eqnarray*} and a unique
unitary isomorphism  $\psi : \mathcal{H}_{\varphi}\longrightarrow
\mathcal{H}$ such tat
\begin{eqnarray}\Phi(x)\xi=\psi(K_{\varphi}(x,.))=\frac{1}{2}\psi[\check{(R_{x}\varphi)}+\mu(x)\check{(R_{-x}\varphi)}].\end{eqnarray}
Since $\Phi(e)=I$ we get by setting $x=e$ in (3.1)  that $\xi=\psi(\check{\varphi})$. From which we get that $\check{\varphi}=\psi^{-1}(\xi)$. We show that $\check{(R_{x}\varphi)}=R_{-x}\check{\varphi}$ and $\check{(R_{-x}\varphi)}=R_{x}\check{\varphi}$ for all $x\in G$.\\
So that for all $x\in G$ and $\xi\in \mathcal{H}$ we have
$$\Phi(x)\xi=\frac{1}{2}\psi[(R_-x+\mu(x)R_x)\psi^{-1}(\xi)].$$
Hence $\Phi(x)=\psi\circ R(x)\circ \psi^{-1}$ for all $x\in G$.\\
Since $R(x)=\frac{R_{-x}+\mu(x)R_{x}}{2}$ for all $x\in G$ we get
that
$$\Phi(x)=\frac{\psi\circ R_{-x}\circ \psi^{-1}+\mu(x)\psi \circ R_{x}\circ \psi^{-1}}{2}.$$
Setting $M(x)=\psi\circ R_{-x}\circ \psi^{-1}$ for all $x\in G$. We
have for all $x,y \in G$ that
\begin{eqnarray*}M(x+y)&=&\psi\circ R_{-x-y}\circ \psi^{-1}\\
&=&\psi\circ R_{-x}\circ R_{-y}\circ \psi^{-1}\\
&=&\psi\circ R_{-x}\circ \psi\circ \psi^{-1}\circ R_{-y}\circ
\psi^{-1}\\&=& M(x)M(y).
\end{eqnarray*}
and that $M(e)=\psi\circ R_{e}\circ \psi^{-1}=I$. \\
Finally we have that $\Phi(x)=\frac{M(x)+\mu(x)M(-x)}{2}$ for all
$x\in G$ where $M: G\longrightarrow \mathbf{\mathcal{H}}$ is a
multiplicative operator. This ends the proof of theorem.
\end{proof}
In the next corollary we determine solutions of (1.4) taking their
values in the complex $n\times n$ matrices
\begin{cor}
Let $ \Phi: G\longrightarrow M_{n}(\mathbb{C})$ be a continuous
hermitian solution of (1.4). Then there exists $A\in
GL(n,\mathbb{C})$ such that
\begin{equation}\Phi(x)=A\frac{E(x)+\mu(x)E(-x)}{2}A^{-1}, \; x\in G\end{equation}
where $E : G\longrightarrow M_{n}(\mathbb{C})$ has the form
$$\begin{pmatrix}
\gamma_{1} & 0&...&0 \\
0 &\gamma_{2}& ...&0\\
0&0& ...&0\\
...&...& \gamma_{i}&...\\
0&0& ...&\gamma_{n}
\end{pmatrix}
$$
where $\gamma_{1},...,\gamma_{n}\in M(G)$ and $\gamma_{i}\neq
\gamma_{j}$ for all $i,j\in \{1,...,n\}$ such that $i\neq j$
\end{cor}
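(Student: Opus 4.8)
The plan is to combine Theorem 3.1 with the rigidity that the hermitian hypothesis forces in finite dimensions. First I would observe that each matrix $\Phi(x)$ is in fact normal. By Proposition 2.1(i) we have $\Phi(-x)=\mu(-x)\Phi(x)$, and the hermitian condition gives $\Phi(x)^{*}=\Phi(-x)$, so that $\Phi(x)^{*}=\mu(-x)\Phi(x)$. Consequently $\Phi(x)^{*}\Phi(x)=\mu(-x)\Phi(x)^{2}=\Phi(x)\Phi(x)^{*}$, i.e. $\Phi(x)$ is normal. By Proposition 2.1(ii) the operators $\Phi(x)$ pairwise commute, so $\{\Phi(x):x\in G\}$ is a commuting family of normal matrices in $M_{n}(\mathbb{C})$.

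The engine of the proof is then the spectral theorem for a commuting family of normal operators on a finite-dimensional space: such a family is simultaneously unitarily diagonalizable. Hence there is a single $A\in GL(n,\mathbb{C})$ (which may even be taken unitary) and continuous scalar functions $\lambda_{1},\dots,\lambda_{n}:G\to\mathbb{C}$ with
$$A^{-1}\Phi(x)A=\operatorname{diag}(\lambda_{1}(x),\dots,\lambda_{n}(x)),\qquad x\in G.$$
By Proposition 2.1(iii) the conjugated map $A^{-1}\Phi(\cdot)A$ again solves (1.4), and being diagonal it forces each entry $\lambda_{i}$ to solve the scalar $\mu$-cosine equation (1.1); the normalization $\Phi(e)=I$ gives $\lambda_{i}(e)=1$, so every $\lambda_{i}$ is a nonzero solution. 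Stetk{\ae}r's result (1.2) then supplies characters $\gamma_{i}\in M(G)$ with $\lambda_{i}(x)=\tfrac12(\gamma_{i}(x)+\mu(x)\gamma_{i}(-x))$. Writing $E(x)=\operatorname{diag}(\gamma_{1}(x),\dots,\gamma_{n}(x))$ yields $A^{-1}\Phi(x)A=\tfrac12(E(x)+\mu(x)E(-x))$, which is exactly the asserted formula (3.3). The continuity and multiplicativity of each $\gamma_{i}$ are routine: multiplicativity follows by reading off diagonal entries in $M(x+y)=M(x)M(y)$ (equivalently, from the scalar theory), and continuity is inherited from that of $\Phi$ and the diagonalization.

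The step I expect to be the genuine obstacle is the claim that the characters $\gamma_{1},\dots,\gamma_{n}$ can be taken pairwise distinct, since as stated this fails for trivial reasons (for example $\Phi(x)=\lambda(x)I$ produces $\gamma_{1}=\cdots=\gamma_{n}$). The correct reading, which I would make explicit, is obtained by grouping: among the $\lambda_{i}$ one collects the distinct characters that actually occur, so that after a permutation of coordinates absorbed into $A$ the matrix $E$ is block-scalar, each block carrying one character and the characters on distinct blocks being genuinely distinct. With this interpretation the equality $\Phi(x)=A\,\tfrac12(E(x)+\mu(x)E(-x))\,A^{-1}$ holds, and the distinctness of the listed characters is precisely the statement that no two blocks share the same $\gamma$.
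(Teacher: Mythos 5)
Your proof follows essentially the same route as the paper's: simultaneous diagonalization of the commuting family $\{\Phi(x)\}$, reduction of each diagonal entry to the scalar $\mu$-cosine equation (1.1), and Stetk{\ae}r's formula (1.2) to produce the characters $\gamma_{i}$. You are in fact more careful than the paper on two points --- you justify the simultaneous diagonalization by checking that each $\Phi(x)$ is normal (the paper merely asserts it from commutativity and the hermitian condition), and you rightly observe that the pairwise-distinctness claim $\gamma_{i}\neq\gamma_{j}$ is false as literally stated (e.g.\ for $\Phi(x)=\lambda(x)I$) and must be read as listing the distinct characters that occur, grouped into blocks.
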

\begin{proof}
since $\Phi(x)\Phi(y)=\Phi(y)\Phi(x)$ for all $x,y\in G$ and
$\Phi(x)^{*}=\Phi(-x)$ for all $x\in G$ it follows that $\Phi(x)$
for all $x\in G$ can be diagonalized simultaneously. So there exists
$ A\in GL(n,\mathbb{C})$ such that
$$\Phi(x)=A\begin{pmatrix}
\omega_{1} & 0&...&0 \\
0 &\omega_{2}& ...&0\\
0&0& ...&0\\
...&...& \omega_{i}&...\\
0&0& ...&\omega_{n}
\end{pmatrix}A^{-1}.$$
Since $\Phi(x+y)+\mu(y)\Phi(x-y)=2\Phi(x)\Phi(y)$ for all $x,y\in G$
it follows that
$\omega_{i}(x+y)+\mu(y)\omega_{i}(x-y)=2\omega_{i}(x)\omega_{i}(y)$
for all $i\in \{1,...,n\}$. According to [22] there exists
$\gamma_{i}\in \mathcal{M}(G)$ for all $i\in \{1,...,n\}$. such that
$\omega(x)=\frac{\gamma_{i}(x)+\mu(x)\gamma_{i}(-x)}{2}$ for all
$x\in G$. So $\Phi(x)=A\frac{E(x)+\mu(x)E(-x)}{2}A^{-1}$ where
$E=\begin{pmatrix}
\gamma_{1} & 0&...&0 \\
0 &\gamma_{2}& ...&0\\
0&0& ...&0\\
...&...& \gamma_{i}&...\\
0&0& ...&\gamma_{n}
\end{pmatrix}$ and $\gamma_{i}\in \mathcal{M}(G)$ such that $\gamma_{i}\neq \gamma_{j}$ for $i\neq j$. This ends the proof of corollary
\end{proof}
\section{applications}
Throughout  this section we adhere to the terminology used in [7]. Let $G$ be a locally compact commutative group and let $\mu=1$. A mapping $\Phi: G\longrightarrow \mathbf{B}(\mathcal{H})$ will be said to be uniformly bounded if $ sup\{\|\Phi(x)\| : \; x\in G\}_{\mathbf{B}(\mathcal{H})}< +\infty $. The hermitian operator-valued cosine functions is denoted by $*$-operator-valued cosine functions in [7].\\\\
According to Theorem 1 in [7] we get the following proposition
\begin{prop}
Let $\Phi: G\longrightarrow \mathbf{B}(\mathcal{H})$ be a uniformly
bounded operator-valued cosine functions. Then there is an
invertible $S\in \mathbf{B}(\mathcal{H})$ such that
$\Psi(x)=S\Phi(x)S^{-1}$ for all $x\in G$ is an hermitian
operator-valued cosine functions
\end{prop}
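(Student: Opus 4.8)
The plan is to derive the statement from W.\ Chojnacki's Theorem 1 in [7], which is the substantive input; the rest is bookkeeping. First I would record a simplification coming from $\mu=1$: since $\Phi$ solves (1.5), Proposition 2.1(i) gives $\Phi(-x)=\Phi(x)$, so $\Phi$ (and any conjugate of it) is even in $x$. Consequently, for a cosine function the hermitian requirement $\Psi(x)^{*}=\Psi(-x)$ is equivalent to the self-adjointness of each $\Psi(x)$. Thus the task reduces to producing an invertible $S\in\mathbf{B}(\mathcal{H})$ for which every $S\Phi(x)S^{-1}$ is self-adjoint, and Chojnacki's theorem asserts exactly that a uniformly bounded cosine function is similar to such a self-adjoint-valued ($*$-operator-valued) one.

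Next I would recall the mechanism that makes the uniform-boundedness hypothesis essential, treating its execution as a black box. Because $G$ is locally compact abelian it is amenable, and one uses an invariant mean together with the bound $\sup_{x\in G}\|\Phi(x)\|<+\infty$ to manufacture a new inner product $\langle\cdot,\cdot\rangle_{1}$ on $\mathcal{H}$, equivalent to the original one, under which each $\Phi(x)$ is self-adjoint. The delicate point—settled in [7]—is that the averaging must be performed on the representation underlying the cosine function rather than on $\Phi$ itself, since the cosine relation is quadratic rather than multiplicative; this is where I expect the genuine obstacle to lie.

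I would then convert the equivalent inner product into the operator $S$. As $\langle\cdot,\cdot\rangle_{1}$ is equivalent to $\langle\cdot,\cdot\rangle$, it is a bounded sesquilinear form, so the Riesz representation theorem yields a bounded, positive, boundedly invertible operator $P$ with $\langle u,v\rangle_{1}=\langle Pu,v\rangle$ for all $u,v\in\mathcal{H}$. Self-adjointness of $\Phi(x)$ for $\langle\cdot,\cdot\rangle_{1}$ reads $\langle P\Phi(x)u,v\rangle=\langle Pu,\Phi(x)v\rangle$, i.e.\ $P\Phi(x)=\Phi(x)^{*}P$, hence $\Phi(x)^{*}=P\Phi(x)P^{-1}$. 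Putting $S=P^{1/2}$ (the positive, self-adjoint, invertible square root) and $\Psi(x)=S\Phi(x)S^{-1}$, one computes
\begin{equation*}
\Psi(x)^{*}=S^{-1}\Phi(x)^{*}S=P^{-1/2}\bigl(P\Phi(x)P^{-1}\bigr)P^{1/2}=P^{1/2}\Phi(x)P^{-1/2}=\Psi(x),
\end{equation*}
so each $\Psi(x)$ is self-adjoint; combined with $\Psi(-x)=S\Phi(-x)S^{-1}=\Psi(x)$ this gives the hermitian identity $\Psi(x)^{*}=\Psi(-x)$.

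Finally I would verify that $\Psi$ is still an operator-valued cosine function. This is immediate from Proposition 2.1(iii), which shows that conjugation by an invertible operator preserves (1.4) with $\mu=1$, while $\Psi(e)=S\Phi(e)S^{-1}=I$ and the continuity of $\Psi$ are clear. In summary, everything after the construction of the $\Phi$-adapted equivalent inner product is the routine translation displayed above, and that construction is precisely the content of Theorem 1 in [7].
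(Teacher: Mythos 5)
Your proposal is correct and follows essentially the same route as the paper, which simply invokes Theorem 1 of Chojnacki [7] without any further argument. The extra detail you supply --- reducing the hermitian condition $\Psi(x)^{*}=\Psi(-x)$ to self-adjointness via $\Phi(-x)=\Phi(x)$, and converting the equivalent invariant inner product into the similarity $S=P^{1/2}$ --- is accurate and merely makes explicit what the paper leaves to the citation.
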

In the next theorem we use our study to solve the equation (1.5)
\begin{thm}
Let $\Phi: G\longrightarrow \mathbf{B}(\mathcal{H})$ be a uniformly
bounded operator-valued cosine functions. Then there is an
invertible $S\in \mathbf{B}(\mathcal{H})$ and a multiplicative
operator $M : G\longrightarrow \mathbf{B}(\mathcal{H})$ such that
$$\Phi(x)=S\frac{M(x)+M(-x)}{2}S^{-1}, \; x\in G.$$
\end{thm}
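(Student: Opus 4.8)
The plan is to reduce the statement to the two results already in hand: the hermitization procedure of Proposition 4.1 (which rests on Chojnacki's Theorem 1 in [7]) and the structure theorem for hermitian solutions, Theorem 3.1. Throughout this section $\mu=1$, so the $\mu$-cosine equation (1.4) is exactly the cosine equation (1.5), and Theorem 3.1 specializes: for any hermitian solution $\Psi$ it yields a multiplicative operator $M$ with $\Psi(x)=\frac{M(x)+M(-x)}{2}$.

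First I would invoke Proposition 4.1 to obtain an invertible operator $S\in\mathbf{B}(\mathcal{H})$ such that $\Psi(x):=S\Phi(x)S^{-1}$ is a hermitian operator-valued cosine function. Two points should be recorded explicitly, though both are immediate: that $\Psi$ is again a solution of (1.5) --- this is Proposition 2.1 (iii) --- and that the normalization survives, namely $\Psi(e)=S\Phi(e)S^{-1}=SIS^{-1}=I$. Hence $\Psi$ is a genuine hermitian operator-valued cosine function in the sense required by Theorem 3.1.

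Next I would apply Theorem 3.1 to $\Psi$. Since $\mu=1$, this produces a continuous multiplicative operator $M:G\longrightarrow\mathbf{B}(\mathcal{H})$ with $\Psi(x)=\frac{M(x)+M(-x)}{2}$ for all $x\in G$. Unwinding the conjugation gives
$$\Phi(x)=S^{-1}\Psi(x)S=S^{-1}\,\frac{M(x)+M(-x)}{2}\,S,\qquad x\in G.$$
Finally I would relabel: putting $S':=S^{-1}$, which is again invertible, yields $\Phi(x)=S'\,\frac{M(x)+M(-x)}{2}\,(S')^{-1}$, which is the asserted form after renaming $S'$ back to $S$.

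There is no substantial new obstacle here: the theorem is essentially the composition of Proposition 4.1 with Theorem 3.1, and the analytic depth resides entirely in those cited results. The only care required is bookkeeping --- getting the direction of the conjugation right and absorbing the inverse into a relabeled invertible operator --- together with checking that the unit normalization $\Phi(e)=I$ is transported correctly through the conjugation, so that Theorem 3.1 is genuinely applicable to $\Psi$.
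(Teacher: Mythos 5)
Your proposal is correct and follows essentially the same route as the paper: conjugate via Proposition 4.1 to obtain a hermitian cosine function, then apply Theorem 3.1 with $\mu=1$. The paper's own proof is a two-line version of this; your extra bookkeeping (checking $\Psi(e)=I$, invoking Proposition 2.1(iii), and relabeling $S^{-1}$ as the $S$ in the statement) only makes explicit what the paper leaves implicit.
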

\begin{proof}
By using Proposition 4.1 we get that $\Psi(x)=S\Phi(x)S^{-1}$  is a
solution of (1.5) such that $\Psi(x)^{*}=\Psi(x^{-1})$ for all $x\in
G$. According to Theorem 3.1 we get the remainder.
\end{proof}
 \vspace{1cm}
 Bouikhalene Belaid\\ Departement of Mathematics and Informatics\\
Polydisciplinary Faculty, Sultan Moulay Slimane university, Beni Mellal, Morocco.\\
E-mail : bbouikhalene@yahoo.fr.\\\\
 Elqorachi Elhoucien, \\Department of Mathematics,
\\Faculty of Sciences, Ibn Zohr University, Agadir,
Morocco,\\
E-mail: elqorachi@hotmail.com\\\\
\end{document}